\newif\ifjsc\jsctrue
\newcommand{\pcite}[1]{\citeparen{\cite{#1}}}
\newcommand{\citeparen}[1]{#1}
    \newcommand{\mybibliography}{\bibliography{refs-bibtex}}
    \renewcommand{\citeparen}[1]{(#1)}
    \newcommand{\mybibliography}{\printbibliography}
\algnewcommand{\ParState}[1]{\State\parbox[t]{\dimexpr\linewidth-\the\ALG@thistlm}{\strut #1\strut}}
\algnewcommand{\LineComment}[1]{\vspace{1em}\ParState{\textit{(#1)}}}
\algnewcommand{\Assert}[0]{\textbf{assert} }
\pgfplotsset{compat=1.3}
\newcommand{\code}[1]{%
\begingroup%
\let\oldunderscore\_%
\renewcommand{\_}{\discretionary{\oldunderscore}{}{\oldunderscore}}%
\renewcommand{\-}{\discretionary{\textrm{-}}{}{}}%
\texttt{#1}%
\endgroup}
\newcommand{\define}[1]{\textbf{#1}}
\newcommand{\FF}{\mathbb{F}}
\newcommand{\NN}{\mathbb{N}}
\newcommand{\QQ}{\mathbb{Q}}
\newcommand{\RR}{\mathbb{R}}
\newcommand{\ZZ}{\mathbb{Z}}
\newcommand{\cO}{\mathcal{O}}
\newcommand{\cX}{\mathcal{X}}
\newcommand{\val}{\operatorname{val}}
\def\comment{}
\def\endcomment{}
\long\def\comment#1\endcomment{}
    \theoremstyle{plain}
        \newtheorem{lemma}{Lemma}
        \newtheorem{lemma}{Lemma}[section]
    \theoremstyle{definition}
    \newtheorem{algorithm}[lemma]{Algorithm}
    \newtheorem{definition}[lemma]{Definition}
    \newtheorem{example*}[lemma]{Example} 
    \newenvironment{example}    
      {%
       \pushQED{\qed}\begin{example*}}
      {\popQED\end{example*}}
    \theoremstyle{remark}
    \newtheorem{remark}[lemma]{Remark}
\newcommand{\qedabove}{\vspace*{-1.3\baselineskip}\qedhere}
\renewcommand*{\verbatim@font}{\ttfamily\fontseries{m}\selectfont}
\lstdefinelanguage{Magma}{
  morekeywords={end,function,intrinsic,procedure,for,while,repeat,until,do,in,if,else,elif,then,error,assert,require,when,where,is,print,printf,vprint,vprintf,time,declare,verbose,type,attributes,return,continue,break,delete,loop},
  morekeywords=[2]{eq,ne,le,lt,ge,gt,cmpeq,cmpne,not,notin,and,or,notsubset,subset,meet,join,diff,sdiff,assigned,eval},
  morekeywords=[3]{sub,ncl,func,proc,ideal,elt},
  morekeywords=[4]{AnyPadExact,StrPadExact,PadExactElt,FldPadExact,FldPadExactElt,RngUPol_FldPadExact,RngUPolElt_FldPadExact,RngMPol_FldPadExact,RngMPolElt_FldPadExact,SetCart_PadExactElt,Tup_PadExactElt,Val_PadExactElt,Val_FldPadElt,Val_RngUPolElt_FldPad,Val_RngMPolElt_FldPad,RngInt,RngIntElt,SetCart,Tup,List,FldNum,FldNumElt,FldRat,FldRatElt,FldPad,FldPadElt,Getter,BoolElt,SetCart_PadExact,Tup_PadExact},
  sensitive=true,
  morecomment=[l]{//},
  morecomment=[s]{/*}{*/},
  morestring=[b]",
}
\begin{document}

\begin{frontmatter}

\title{Exact \(p\)-adic computation in Magma{\let\thefootnote\relax\footnotetext{\copyright{} 2020. This manuscript version is made available under the CC-BY-NC-ND 4.0 license \url{http://creativecommons.org/licenses/by-nc-nd/4.0/}}}}

\author{Christopher Doris}
\address{Heilbronn Institute for Mathematical Research, School of Mathematics, University of Bristol, BS8 1UG, UK}
\ead{christopher.doris@bristol.ac.uk}
\ead[url]{https://cjdoris.github.io}


\begin{abstract}
We describe a new arithmetic system for the Magma computer algebra system for working with \(p\)-adic numbers exactly, in the sense that numbers are represented lazily to infinite \(p\)-adic precision. This is the first highly featured such implementation. This has the benefits of increasing user-friendliness and speeding up some computations, as well as forcibly producing provable results. We give theoretical and practical justification for its design and describe some use cases. The intention is that this article will be of benefit to anyone wanting to implement similar functionality in other languages.
\end{abstract}

\begin{keyword}
Local fields, \(p\)-adic fields, lazy evaluation
\end{keyword}
\end{frontmatter}

\section{Introduction}
\label{sec-intro}

When dealing with completed fields, such as \(\RR\) or \(\QQ_p\), it is generally quite difficult to represent elements exactly. Instead, the commonest way to represent elements is by specifying them to some pre-determined precision, and then performing operations such as arithmetic to this precision also. This is the foundation of floating point arithmetic. For example, one might represent the real number \(e\) by its approximation \(2.718281828\) to a precision of 10 real digits. We say such a representation is \define{inexact} because several real numbers can have the same representation: \(e\), \(2.718281828\) and \(2.7182818281\) all have the same representation to 10 digits precision.

Such a representation is also usually \define{zealous} meaning that when an operation is performed, such as multiplication, it is immediately computed to the required precision. For instance, computing \(e \times e\) will work to 10 digits precision and actually compute \(2.718281828 \times 2.718281828 = 7.389056096\). In fact, \(e \times e = 7.389056098\ldots\), demonstrating that precision errors can creep into the results, so that they are in fact less precise than the precision claims.

An often-suggested alternative to zealous arithmetic is \define{lazy arithmetic}, in which an operation does not produce an answer per-se, but a ``promise to produce an answer to a desired precision''. That is, calling \(e \times e\) would not produce the approximation \(7.389056096\), but would produce a function which, when called with an integer \(k\), returns an approximation to \(e \times e\) to \(k\) digits precision.

Such a function can be said to be an \define{exact} representation of a real number, because no two distinct real numbers have the same representation: for a sufficiently large precision \(k\), the representing functions will return different approximations.

These comments hold true for \(p\)-adic numbers too. For instance, an element of \(\QQ_p\) is generally represented in zealous, inexact arithmetic by its residue class in \(\QQ_p / p^k \ZZ_p\) for some absolute precision \(k\): e.g. \(1 + 2^{10} \ZZ_2\) might represent \(1\), \(1 + 2^{10}\) or \(1 + 5 \times 2^{100}\).

There are numerous implementations of such zealous, inexact \(p\)-adic arithmetic. FLINT \pcite{flint} provides some low-level arithmetic with elements of \(\QQ_p\), univariate polynomials over \(\QQ_p\), and unramified extensions of \(\QQ_p\). Sage \pcite{sage} and Magma \pcite{magma} have more fully-featured implementations, including arbitrary finite extensions of \(\QQ_p\) and higher-level routines for tasks such as factoring.

Also of note is an implementation in Mathemagix \pcite{mathemagix} of the so-called \define{relaxed \(p\)-adic arithmetic} \pcite{relaxed,relaxedhensel}, which treats elements of \(\QQ_p\) like an infinite sequence of \(p\)-adic coefficients, somewhat like \(\FF_p((t))\), and provides algorithms to lazily compute these coefficients one at a time. The latter property makes relaxed arithmetic particularly useful for \(p\)-adic recursion solving. In principle it is useful in general but is somewhat more complicated to implement than the lazy arithmetic presented in this article and as such is less fully featured.

A more in-depth description of different \(p\)-adic arithmetic systems is given by \cite{caruso}.

In this article, we present a new implementation of a lazy, exact \(p\)-adic arithmetic system for the Magma computer algebra system \pcite{magma}. As mentioned above, Magma already has a fully-featured implementation of zealous, inexact \(p\)-adic arithmetic. Our system uses this inexact functionality already available as much as possible, allowing for rapid addition of new features to the exact arithmetic as soon as they are available inexactly.

To the author's knowledge, this is the first highly-featured, general-purpose implementation of lazy \(p\)-adic arithmetic.

This article describes the rationale and the fundamental concepts behind the implementation, but does not constitute a user manual. The implementation will be available in an upcoming release of Magma with documentation available in the Magma Handbook.

\subsection{Prototypes}

While this implementation is written in C, we have also produced two prototype exact \(p\)-adic arithmetic systems written in Magma \citep{exactpadicscode,exactpadics,exactpadics2code}. Although much less performant, the development of these prototypes heavily informed the present implementation. In particular, the present implementation is very similar to the second prototype, and the rationale for not following the more obvious system in the first prototype is given in Section~\ref{sec-epoch-vs-prec}.

To demonstrate the utility of exact arithmetic, our prototype packages have been used to implement the algorithm \ifjsc of\else in\fi{} \cite{conductor} to compute the 2-part of the conductor of a hyperelliptic curve of genus 2 defined over a number field. This implementation is available from \cite{conductorcode}. It uses such high-level \(p\)-adic routines as: computing the completion of a number field at a finite place; computing the factorization of a univariate polynomial and the fields defined by its factors; and Hensel-lifting roots of a system of multivariate equations.

As another application, the prototype packages can optionally be used with the implementation of the algorithms of \cite{galoisgroups} for computing the Galois group of a \(p\)-adic polynomial. This is available from \cite{galoiscode}. With either package present the Galois group algorithm becomes provably correct, whereas otherwise with inexact \(p\)-adics there is no such guarantee. We also find that the algorithms run faster with exact \(p\)-adics, at least for reasonably high-degree inputs.

It is work in progress to translate these algorithms to the new system.

\subsection{Notation}

The field of \(p\)-adic numbers is denoted \(\QQ_p\) and its ring of integers is \(\ZZ_p\). If \(K/\QQ_p\) is a finite extension, then \(\cO_K\) is its ring of integers, \(\pi_K \in \QQ_p\) is a uniformizing element, and \(\val_K\) is the \(\pi_K\)-adic valuation such that \(\val_K(\pi_K)=1\). We drop the subscript when it is clear from context.

\subsection{Structure of this article}

We motivate the development of exact arithmetic in Section~\ref{sec-compare} by comparing it with zealous arithmetic. After a brief description in Section~\ref{sec-inexact} of the inexact arithmetic already available in Magma, we describe our new system in Section~\ref{sec-exact}. Next, in Section~\ref{sec-rationale} we give a rationale for our design, arguing based on simple complexity analysis as well as experience, then in Section~\ref{sec-timings} we test the performance of our system in practice and compare to inexact arithmetic. Finally, in Section~\ref{sec-algorithms}, we describe a few algorithms which highlight some of the differences in implementing functionality for exact arithmetic as compared to inexact arithmetic.

\section{Motivation: Comparison of zealous and lazy arithmetic}
\label{sec-compare}

\subsection{Precision guessing}

In zealous arithmetic, the user is generally required to choose a precision to work at in advance. Then all computations are performed to that precision, and it may happen that the precision chosen was not sufficient. In this case, the user will probably start the computation over with a higher precision. This process of manually increasing the precision of a computation can be burdensome for the user. In lazy arithmetic, such precision decisions are made automatically as far as possible.

\begin{example}
Here is a typical interactive Magma session, using its builtin inexact arithmetic:
\begin{lstlisting}
> // try to factorize at precision 10
> K := pAdicField(2, 10);
> R<x> := PolynomialRing(K);
> f := my_favourite_polynomial(R);
> Factorization(f);
error: ...
> // try to factorize at precision 20
> K := pAdicField(2, 20);
> R<x> := PolynomialRing(K);
> f := my_favourite_polynomial(R);
> Factorization(f);
error: ...
> // try to factorize at precision 40
> K := pAdicField(2, 40);
> R<x> := PolynomialRing(K);
> f := my_favourite_polynomial(R);
> Factorization(f);
[ <x^10 + ... >, ... ]
\end{lstlisting}

Using lazy arithmetic provided by our package, the equivalent session would be the following. Note that there is no explicit mention of precision.
\begin{lstlisting}
> K := pAdicField(2 : Exact);
> R<x> := PolynomialRing(K);
> f := my_favourite_polynomial(R);
> Factorization(f);
[ <x^10 + ... >, ... ]
\end{lstlisting}
\qedabove
\end{example}

\subsection{Repeated computation}

Some \(p\)-adic computations fall into two distinct parts: the first part being computationally expensive but independent of the precision, and the second part being computationally cheap. For example, polynomial root-finding algorithms typically have an expensive first part to identify the roots, and then use cheap Hensel lifting in the second part to approximate these roots to high precision.

If such a computation is performed with inexact arithmetic at several increasing precisions, as in the previous section, then both parts will be repeated at each precision.

If we instead use exact arithmetic, the expensive first part is only performed once, and only the cheap second part is repeated for each successive precision.

\subsection{Locality of precision}

In lazy arithmetic, each individual computation is performed to approximately the smallest precision it can be, and so precisions are very local in the computation. In zealous arithmetic, the precision is generally chosen once at the start of a computation, so each operation is performed to the same precision, and so precisions are more global. If there is a single operation requiring a high global precision, this increases the precision that all other operations are performed to, which is a performance hit compared to lazy arithmetic.

\begin{example}
An example comes from the conductor algorithm mentioned in the introduction. One portion of this algorithm takes a polynomial \(f(x) \in \QQ_2[x]\), computes its factorization, chooses a factor \(g(x)\), computes the extension \(K/\QQ_2\) defined by \(g\), and then finds a root of \(g\) in \(K\). Usually, the precision required for the factorization far exceeds that of the root-finding; however, because the root-finding is over an extension \(K\), if it were to be done at the same high precision as the factorization, its run-time would often dominate.
\end{example}

\subsection{Correctness and provability}

When a \(p\)-adic number \(x \in K\) is represented inexactly as a class \(x + \pi^k \cO\), then it can be ambiguous whether it is really representing \(x\) or the class itself. For many operations, the distinction makes no difference; for example since \[(x+y)+\pi^k \cO = (x+\pi^k \cO) + (y + \pi^k \cO)\] then addition works the same in either interpretation. For other operations, Magma can produce potentially misleading answers; for example if \(x\) is represented as \(0 + \pi^k \cO\) then \(\code{Valuation}(x)\) will return \(k\), when in fact all we really know is that \(\val(x) \geq k\).

Our implementation avoids this ambiguity. Whenever we provide a function whose name is a mathematical concept (such as \code{Valuation}) then the returned value will be correct, and in particular will not depend on the precision to which the input was given --- instead, more precision will be computed as required. This frees the user to think about these objects as actual \(p\)-adic numbers, not as residue classes.

Similarly, if \(x=1\) and \(y=1+2^{10}\) are both represented inexactly as \(1+2^{10}\ZZ_2\), then they will be reported as equal, even though the true values are different. In fact, it is not possible to determine that two arbitrary \(p\)-adic numbers are equal when given to any finite precision, because they could always differ at some higher precision. With our exact arithmetic, \(x\) and \(y\) would be reported as unequal by repeatedly increasing their precision until they differ. If also \(z=1\), then testing if \(x\) and \(z\) are equal will most likely not terminate, but it will certainly not return false.

In some cases, such as polynomial root-finding and factorization (\code{Roots} and \code{Fact\-or\-iz\-ation}), the correctness of the output is forced by the fact that the outputs are given exactly. That is, if \code{Roots} returns a root (exactly), then it by definition comes with a program to compute an approximation to the root to arbitrarily high precision, and therefore assuming the program is correct this is a proof that the root is correct. In the case of \code{Roots}, it is Hensel's lemma which provides this proof.

Any functions we provide whose output depends on the representation itself are given names which make this clear. The terms \code{Weakly} and \code{Definitely} are used to denote tests which can give false positives or false negatives; for example \(\code{IsWeaklyZero}(x)\) is true if its input appears to be zero up to some precision (but does not guarantee it is zero). Similarly the term \code{Weak} denotes functions whose output depends on the precision of its inputs, so for example \(\code{WeakValuation}(x)\) returns a lower bound on \(\val(x)\) dependent on the current precision of \(x\).

\subsection{Overheads}

The main down-sides of lazy arithmetic are the extra time and memory overheads introduced. In lazy arithmetic, \(p\)-adic values depend on other \(p\)-adic values, and all these dependencies need to be kept in memory for the duration of a computation. Each time an operation is performed, some dependency tracking and propagation needs to occur, which entails some processing time overhead.

This said, we find that these overheads are usually negligible unless one performs a large number of ordinarily very fast operations, such as basic arithmetic, and even then the overheads do not dominate (see Section~\ref{sec-timings}). If this is the case, then one can consider implementing the whole sequence of operations as a new atomic \(p\)-adic operation, which therefore now only contributes a single node to the dependency graph.

\section{Inexact \texorpdfstring{\(p\)}{p}-adics in Magma}
\label{sec-inexact}

Before describing our implementation, we give a brief description of the inexact \(p\)-adics already available in Magma. In this discussion, \(K/\QQ_p\) is a finite extension and \(\cO/\ZZ_p\) is its ring of integers.

The inexact representation of \(K\) has type \code{FldPad} and \(\cO\) has type \code{RngPad}. If \(K=\QQ_p\) then both are defined simply by the prime \(p\). Otherwise, \(K\) is defined by a polynomial over some sub-field \(M\). The coefficients of this polynomial are themselves inexact.

Alternatively, \(K\) may be defined by a function \(m : \NN \to M[x]\), and where \(m(k)\) is the defining polynomial of \(K/M\) given to absolute precision \(k\). We refer fields defined in this way as \define{semi-exact} because their defining polynomial is defined exactly via \(m\) but the elements are still inexact.

One can change the precision of such a semi-exact field, yielding a field defined by a polynomial to that precision. Importantly, Magma understands these fields to be compatible with each other, allowing for very cheap coercion of elements between them.

Integer elements \(x \in \cO\) have type \code{RngPadElt}, and are defined by its residue class \[x + \pi^k \cO\] for some \(k \ge 0\). This is its \define{absolute precision}, which can be different for each element. If \(K=\QQ_p\) or \(K\) is semi-exact then \(k\) is unbounded, but if \(K\) is defined by an inexact polynomial, then \(k\) is bounded by the absolute precision of that polynomial.

If \(x \in \pi^k \cO\) then we say \(x\) is \define{weakly zero}. If \(x\) is zero then it is weakly zero, but the converse does not hold, if \(x\) is weakly zero we only know \(\val(x)\ge k\). If \(x-y\) is weakly zero, then we say \(x\) and \(y\) are \define{weakly equal}.

Field elements \(x \in K\) have type \code{FldPadElt}, and are defined by the pair \[(v, y) \in \ZZ \times \cO \quad\text{where}\quad x = \pi^v y.\] Clearly \(\val(x) \ge v\), and we refer to \(v\) as the \define{weak valuation} of \(x\).

Recalling that \(y\) is defined by \(y + \pi^k \cO\), then \(x\) is equivalently defined by \(\pi^v y + \pi^{v+k} \cO\). The \define{absolute precision} of \(x\) is \(v+k\) and the \define{relative precision} of \(x\) is \(k\).

This definition is normalized so that if \(k>0\) then \(y\in\cO^\times\). Hence \(x\) is weakly zero if and only if \(k=0\), and if \(x\) is not weakly zero then \(\val(x)=v\).

Additionally, \(v=\infty\) is allowed, in which case \(x=0\) is the \define{precise zero}. In this case, we set \(k=0\) so that the precise zero is also weakly zero.

\section{Our implementation}
\label{sec-exact}

\subsection{Core description}

All \define{\(p\)-adic objects} for which we provide an exact implementation (such as \(p\)-adic fields, \(p\)-adic integer rings, elements thereof, and polynomials defined over these) are represented in a common fashion, which we describe in this section.

Let \(X\) be such an object, then \(X\) is defined by three pieces of information: its \define{type}, its \define{dependencies} and its \define{kind}.

The type means its type in the Magma type system, including \code{FldXPad} for \(p\)-adic fields and \code{FldXPadElt} for their elements. All exact types have \code{XPad} in their name.

The dependencies is a list of other objects of any type (not necessarily exact \(p\)-adic objects) on which the value of \(X\) depends. For example, if \(X\) is an element of \(K\) equal to the integer \(x\in\ZZ\) then the dependencies of \(X\) may be \([K, x]\).

The kind describes how \(X\) may be computed given its dependencies. For example, there is a kind for \(p\)-adic field elements which are coerced from integers, as in the previous paragraph. Similarly, there is a kind for fields which are an unramified extension of some other field, or for a field element which is the sum of two other elements of the same field, or for a polynomial which is an irreducible factor of another polynomial.

Concretely, the kind is an integer which indexes into a static table of kind information for the type, fully describing the kind. Each entry contains: the number of dependencies of objects of this kind (or a wildcard value meaning any number is allowed), the types expected of each dependency, and a \code{GetApprox} function, whose usage we describe shortly.

The type, dependencies and kind define the object \(X\) by defining an infinite sequence \([\tilde X_1, \tilde X_2, \ldots, \tilde X_n, \ldots]\) of increasingly precise (inexact) approximations to \(X\). For example, the approximations of a \(p\)-adic field element of type \code{FldXPadElt} are inexact \(p\)-adic field elements of type \code{FldPadElt}.

The \(n\)th term of this sequence is defined and computed as follows. Let \([D^{(1)}, \ldots, D^{(d)}]\) be the dependencies of \(X\). For \(i=1,\ldots,d\), if \(D^{(i)}\) is an exact \(p\)-adic object, compute \(\tilde D^{(i)}_n\) recursively; otherwise define \(\tilde D^{(i)}_n := D^{(i)}\). Then \[\tilde X_n := \text{\code{GetApprox}}(n, [\tilde D^{(1)}_n, \ldots, \tilde D^{(d)}_n])\] where the \code{GetApprox} function is looked up using the type and kind of \(X\).

We describe \(n\) as the \define{epoch}, so that \(\tilde X_n\) is the \define{approximation of \(X\) at epoch \(n\)}. Then the above definition may be put as: the approximation of \(X\) at epoch \(n\) is the \code{GetApprox} function called with the epoch \(n\) and the list of approximations of its dependencies at epoch \(n\).

Each element keeps a cache \([\tilde X_1, \ldots, \tilde X_n]\) of the approximations computed so far. We require that precision is not lost: if \(n_1\le n_2\) then \(\tilde X_{n_2}\) is at least as precise as \(\tilde X_{n_1}\). Figure~\ref{fig-type} illustrates the relationships between defining information of \(X\).

If \(X\) is an exact \(p\)-adic structure, such as a ring of \(p\)-adic integers, then its elements are also exact \(p\)-adic objects. If \(x \in X\) is such an element, then we require that \(\tilde x_n \in \tilde X_n\) for all \(n\). That is, the approximation of an element of a structure must be an element of the approximation of the structure. In particular approximations of different elements of \(X\) at the same epoch belong to the same approximate structure.

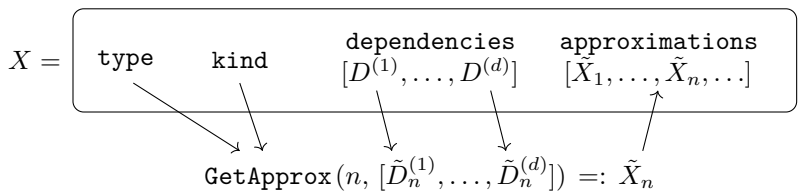
\begin{figure}
\centering
\begin{tikzpicture}[
  attr/.style={font=\ttfamily},
  box/.style={draw, inner sep=0.2cm, rounded corners},
  remember picture,
]
\node(X) at (-0.2,0) {\(X =\)};
\node(type) at (1,0) [attr] {type};
\node(kind) at (2.5,0) [attr] {kind};
\node(deps) at (5,0) [attr, text width=3cm, align=center] {dependencies \([D^{(1)},\ldots,D^{(d)}]\)};
\node(apps) at (8,0) [attr, text width=3cm, align=center] {approximations \([\tilde X_1, \ldots, \tilde X_n, \ldots]\)};
\node(box)[box, fit={(type) (kind) (deps) (apps)}] {};

\node(eq) at (5,-1.5) {\(\code{GetApprox}\,(n, \,[\tilde D_n^{(1)}, \ldots, \tilde D_n^{(d)}]) \,=:\, \tilde X_n\)};

\draw[->](1.1,-0.3)--(2.5,-1.2);
\draw[->](2.5,-0.3)--(2.8,-1.2);
\draw[->](4.3,-0.4)--(4.5,-1.2);
\draw[->](5.8,-0.4)--(6,-1.2);
\draw[->](7.8,-1.2)--(8,-0.4);
\end{tikzpicture}
\caption{Illustration of an exact \(p\)-adic object \(X\).}
\label{fig-type}
\end{figure}

\subsection{Particular types}

In this section, we describe the particular types currently provided by our implementation, namely rings and fields of \(p\)-adic numbers, their elements, and univariate polynomials over these.

We plan to support more aggregate types in the future, including multivariate polynomials, vectors and matrices. These can be modelled on the univariate polynomials.

\subsubsection{\texttt{FldXPad} and \texttt{RngXPad}: fields of \texorpdfstring{\(p\)}{p}-adic numbers and rings of \texorpdfstring{\(p\)}{p}-adic integers}

These have an almost identical implementation to each other. An approximation of a \code{FldXPad}, as returned by its \code{GetApprox} function, is a finite-precision \code{FldPad} whose precision increases with the epoch. Similarly an approximation of a \code{RngXPad} is a finite-precision \code{RngPad}.

There are three kinds of these fields and rings. Firstly there is the kind of prime \(p\)-adic fields and rings, namely \(\QQ_p\) and \(\ZZ_p\). The approximation at epoch \(n\) is the corresponding finite-precision ring or field of precision \(2^n\). See Section~\ref{sec-exp-prec} for the rationale behind this choice of precision.

Then there is the kind of unramified extensions and the kind of totally ramified extensions. These are defined by a base ring or field \(R\) and an inertial or Eisenstein polynomial \(f(x) \in R[x]\) (itself an exact \(p\)-adic polynomial of type \code{RngUPolXPad}). If \(S\) is the extension defined by \(f\), then its approximation \(\tilde S_n\) is the extension of \(\tilde R_n\) defined by \(\tilde f_n\). The defining polynomial is cached by the field or ring, so that the base field, degree and other information about the extension can be deduced.

\subsubsection{\texttt{FldXPadElt} and \texttt{RngXPadElt}: \texorpdfstring{\(p\)}{p}-adic numbers and integers}

An element of an exact \(p\)-adic number field of type \code{FldXPad} has type \code{FldXPadElt}, and similarly for integers. As with the rings and fields themselves, the implementation of their elements is almost identical to each other. Since the approximation of an element of a structure must be an element of the approximation of the structure, an approximation of a \code{FldXPadElt} is an element of a \code{FldPad}, which is a finite-precision \(p\)-adic number of type \code{FldPadElt} as described in Section~\ref{sec-inexact}.

For example the sequence of approximations of \(1 \in \ZZ_3\) might be \[[1+3^2\ZZ_3, 1+3^4\ZZ_3, 1+3^8\ZZ_3, \ldots].\]

There are many kinds of \(p\)-adic numbers. For example, there are kinds for numbers coerced from integers or rationals, or coerced from a base field, or lifted from the residue class field. There are kinds for special elements of the parent ring or field, such as a uniformizing element or generator. There are kinds for numbers arising from arithmetic operations on other numbers, such as addition, multiplication, division and powering. There are also kinds for numbers obtained from more complex routines, such as roots of polynomials obtained via Hensel lifting.

The implementation of the \code{GetApprox} function for many of these kinds is very simple. For example if \(K\) is a \(p\)-adic field and \(a\in\ZZ\) and \(x=\code{Coerce}(K,a)\) is the element of \(K\) equal to \(a\) then \(x\) has dependencies \([K,a]\) and \[\tilde x_n := \code{GetApprox}(n,[\tilde K_n, a]) = \code{Coerce}(\tilde K_n, a).\] Similarly if \(x,y\in K\) and \(z:=x+y\in K\) then \[\tilde z_n := \code{GetApprox}(n,[\tilde x_n, \tilde y_n]) = \tilde x_n + \tilde y_n.\] One must be careful that the \code{GetApprox} function is well-defined at all epochs, for example division requires some extra care (see Section~\ref{sec-min-epoch}).

\subsubsection{\texttt{RngUPolXPad} and \texttt{RngUPolXPadElt}: Univariate polynomial rings and their elements}
\label{sec-rngupolxpad}

We provide the type \code{RngUPolXPad} to represent the ring of polynomials over any exact \(p\)-adic ring, such as a \(p\)-adic field. Its elements have type \code{RngUPolXPadElt}, and both types are exact \(p\)-adic objects, in the established sense that they are defined by a list of dependencies and a kind.

Note that Magma already provides types \code{RngUPol} and \code{RngUPolElt} for univariate polynomials over arbitrary rings. See Section~\ref{sec-aggregates} for why we define a new representation.

There is only one kind of univariate polynomial ring, defined by its base ring. It has one dependency, the base ring, and its approximation at epoch \(n\) is the univariate polynomial ring (of type \code{RngUPol}) over the approximation at epoch \(n\) of its base ring.

\subsection{Further details}

\subsubsection{Minimum epoch}
\label{sec-min-epoch}
\begin{example}
Suppose \(x, y \in \QQ_p\) and we wish to compute \(z = x/y\). Of course, we first check that \(y\) is non-zero, which we do by inspecting each approximation \(\tilde y_n\) of \(y\) in turn until we find one which is not weakly zero. Now suppose \(z\) has dependencies \([x,y]\) and approximations \[\tilde z_n := \code{GetApprox}(n,[\tilde x_n, \tilde y_n]) = \tilde x_n / \tilde y_n.\] Even though \(y\) is non-zero, it might be the case that its first approximation is weakly zero, namely \(\tilde y_1 = 0 + p^v \ZZ_p\), which results in a division-by-zero error when computing \(\tilde x_1/\tilde y_1\).
\end{example}

To avoid this problem, each object \(X\) has an additional piece of information called \code{min\_epoch}, which is the smallest \(n\) for which the corresponding \code{GetApprox} function may be called. That is, we restrict the earlier definition: \[\tilde X_n := \code{GetApprox}(n,[\tilde D_n^{(1)},\ldots,\tilde D_n^{(d)}])\quad\text{for }n\ge\code{min\_epoch}.\] We obtain the approximations at smaller epochs by limiting the precision of \(\tilde X_{\code{min\_epoch}}\) appropriately. In particular, in the case where \(X\) is an element of a structure \(S\), we define \[\tilde X_n := \code{Coerce}(\tilde S_n, \tilde X_{\code{min\_epoch}})\quad\text{for }n<\code{min\_epoch}.\]

In the division example, we set the \code{min\_epoch} to be the smallest epoch \(n\) such that \(\tilde y_n\) is not weakly zero. Since we require that precision must not be lost as \(n\) is increased, it is guaranteed that all subsequent approximations are also not weakly zero.

Another example of using \code{min\_epoch} is ensuring that the leading coefficient of a polynomial is non-zero when computing its discriminant. Another example is ensuring that an approximate root is sufficiently precise to guarantee that Hensel lifting it to a root of a polynomial will succeed.

\subsubsection{Infinite-precision approximations}
\label{sec-ipa}

Recall that if \(S/R\) is an extension of \(p\)-adic number fields or integer rings, defined by the irreducible polynomial \(f(x)\in R[x]\), then its approximation \(\tilde S_n\) is the extension of \(\tilde R_n\) defined by \(\tilde f_n\).

In Magma however, if we were to directly generate these extensions \(\tilde S_n/\tilde R_n\) separately for each epoch, they would not be considered to be related in any way. In particular, coercion between \(\tilde S_n\) at different epochs \(n\) would not be possible.

Instead, we attach to \(S\) its \define{infinite-precision approximation}, denoted \(\tilde S_\infty\), which is a semi-exact representation of \(S\) as described in Section~\ref{sec-inexact}. Its defining map \(m:\NN\to \tilde R_\infty[x]\) returns a sufficiently precise approximation of \(f(x)\).

Then \(S\) has dependencies \([\tilde S_\infty, f]\) and its approximations are actually computed as \[\tilde S_n := \code{GetApprox}(n, [\tilde S_\infty, \tilde f_n]) := \code{ChangePrecision}(\tilde S_\infty, \code{AbsolutePrecision}(\tilde f_n)).\] This definition ensures that the approximation of \(f\) returned by \(m\) is already available.


\subsubsection{Main algorithm}

For clarity, we now present the main algorithm for computing an approximation at epoch \(n\), including using the cache of approximations (which afterwards contains all approximations up to epoch \(n\)), checking the generated approximations are valid (using \code{IsValidApproximation}, which for example checks consistency as in Section~\ref{sec-no-reuse}), and accounting for the minimum epoch.

\begin{algorithm}[\(\code{Approximation}(X,n)\)]
\label{alg:approximation}
Given an exact \(p\)-adic object \(X\) and an epoch \(n\in\NN\), returns the approximation \(\tilde X_n\) of \(X\) at epoch \(n\).
\begin{algorithmic}[1]
\State \(A := \code{ApproximationsCache}(X)\)
\If{\(n > \#A\)}
    \State \(n_{\text{min}} := \code{MinEpoch}(X)\)
    \State \(D := \code{Dependencies}(X)\)
    \State \(\code{GetApprox} := \code{GetApproxFunction}(\code{Type}(X), \code{Kind}(X))\)
    \LineComment{For each epoch to do above the minimum epoch}
    \For{\(i := \max(\#A+1, n_{\text{min}}),\ldots,\max(n,n_{\text{min}})\)}
        \LineComment{Create and check an approximation from the dependencies}
        \State \(\tilde D := [\code{Approximation}(D[j],i) \,:\, j:=1,\ldots,\#D]\)
        \State \(\tilde X := \code{GetApprox}(i,\tilde D)\)
        \State \Assert \(\code{IsValidApproximation}(\tilde X, X, i)\)
        \LineComment{If required, create approximations up to the minimum epoch}
        \If{\(i = n_{\text{min}}\) \textbf{and} \(i > \#A+1\)}
            \State \(S := \code{Parent}(X)\)
            \For{\(j := \#A+1, \ldots, i-1\)}
                \State \(\tilde S := \code{Approximation}(S, j)\)
                \State \(\tilde X' := \code{Coerce}(\tilde S, \tilde X)\)
                \State \Assert \(\code{IsValidApproximation}(\tilde X', X, j)\)
                \State \(\code{Append}(A, \tilde X')\)
            \EndFor
        \EndIf
        \LineComment{Keep this approximation}
        \State \(\code{Append}(A, \tilde X)\)
    \EndFor
\EndIf
\State \Return \(A[n]\)
\end{algorithmic}
\end{algorithm}

\subsubsection{User-defined kinds}
\label{sec-user-kind}

We overload the \code{elt<>} syntax to allow the creation of elements with user-defined kinds in Magma. That is, if \(S\) is an exact \(p\)-adic structure then \[X := \code{elt<\(S\) | ga, \([D^{(1)},\ldots,D^{(d)}]\)>}\] is an element of \(S\) whose approximations are \[\tilde X_n := \code{ga}([\tilde D^{(1)}_n,\ldots,\tilde D^{(d)}_n]).\]

The user get-approx function \code{ga} may be viewed as defining a user-defined kind of element of \(S\).

In fact, all such elements have the same kind. The dependencies of \(X\) are actually \([\code{ga},D^{(1)},\ldots,D^{(d)}]\) and \[\code{GetApprox}(n, [\code{ga},\tilde D^{(1)}_n,\ldots,\tilde D^{(d)}_n]) := \code{ga}([\tilde D^{(1)}_n,\ldots,\tilde D^{(d)}_n]).\]

This construction is particularly useful when \(X\) is the output of some complex algorithm which is simpler to implement in Magma than in the C back-end. For example, our polynomial factorization and Hensel lifting routines use this (see Section~\ref{xp-sec-hensel}).

\subsubsection{Optimizing dependencies}
\label{sec-optimize}

Suppose \(z = f(x^{(1)},\ldots,x^{(d)})\) where \(f\) is some complicated function in exact \(p\)-adic objects \(\cX=[x^{(1)},\ldots,x^{(d)}]\). In particular, while \(z\) recursively depends ultimately on \(\cX\), there are a lot of intermediate values in the graph of dependencies between them.

We now present an optimization which produces a new exact \(p\)-adic object \(z'\) which is equal to \(z\) but whose dependencies are precisely \(\cX\). That is, we eliminate all the intermediate values. The \code{GetApprox} function for \(z'\) will execute a straight-line program to compute an approximation of \(z\) from approximations of \(\cX\).

\paragraph{What is optimized}
The intermediate values have been eliminated, so that the dependency graph is substantially simpler. This means less time will be spent in dependency-tracking code. In particular, to compute an approximation of \(z'\) we have to evaluate a straight-line program, instead of navigating the graph of dependencies of \(z\).

Furthermore, approximations of the intermediate values are computed when needed and then discarded, reducing persistent memory usage.

See Section~\ref{sec-timings} for the practical effect of this optimization.

\paragraph{Down-sides}
The time spent constructing \(z'\) from \(z\) may not be negligible. The \code{min\_epoch} of \(z'\) must be the maximum of the \code{min\_epoch} of any intermediate value between \(z\) and \(\cX\).

\paragraph{Construction}
Given exact \(p\)-adic object \(z\) and a set of exact \(p\)-adic objects \(\cX=[x^{(1)},\ldots,x^{(d)}]\), we construct the optimized \(z'\) as follows.

First, compute the directed acyclic graph of dependencies whose source is \(z\) and whose sinks include \(\cX\) and any non-exact dependencies.

Topologically sort this graph, yielding dependencies \([y^{(1)},\ldots,y^{(m)}]\), so that if \(y^{(i)}\) is a dependency of \(y^{(j)}\) then \(i<j\). We can ensure that \([y^{(1)},\ldots,y^{(r)}]\) are the inexact dependencies, \([y^{(r+1)},\ldots,y^{(r+d)}] = \cX\), and \(y^{(m)} = z\).

For each \(i=r+d+1,\ldots,m\), find the type \(T_i\) and kind \(K_i\) of \(y^{(i)}\), and the indices \(J_i=[j_1,\ldots,j_{d_i}]\) of the dependencies \([y^{(j)}]_{j\in J_i}\) of \(y^{(i)}\). Compute the sequence \(P=[(T_i,K_i,J_i)]_{i=r+d+1}^m\), which is the program.

The dependencies of \(z'\) are \([P, y^{(1)},\ldots, y^{(r+d)}]\), that is, the program and any required inputs. The \code{GetApprox} function receives approximations \([P, \tilde y^{(1)}_n,\ldots, \tilde y^{(r+d)}_n]\), and computes \(\tilde y^{(i)}_n\) successively for \(i=r+d+1,\ldots,m\) using the program. In particular we find the \code{GetApprox} function for the type \(T_i\) and kind \(K_i\) of \(y^{(i)}\), and call it with the approximations \([\tilde y^{(j)}_n]_{j\in J_i}\) which have already been computed. The final approximation computed is \(\tilde y^{(m)}_n = \tilde z_n\), which is returned.

\section{Design rationale}
\label{sec-rationale}

\subsection{Exponentially increasing precision}
\label{sec-exp-prec}

Recall that we define the precision of \(\QQ_p\) at epoch \(n\) to be \(2^n\). All computations over \(\QQ_p\) and its extensions occur to this precision by default. In this section we justify this choice of precision.

Suppose that an inexact computation at precision \(k\) has some cost (e.g. run-time or memory usage) \(C(k)\propto k^\alpha\). Suppose that at epoch \(n\) we work to precision \(b^n\) for some \(b>1\), and therefore using exact \(p\)-adics to achieve at least the same precision has cost \[C^*(k) := \sum_{i=1}^n C(b^i) \propto b^\alpha \frac{b^{\alpha n}-1}{b^\alpha-1}\] where \(b^{n-1}\le k\le b^n\), since this will work at each precision \(b,b^2,\ldots,b^n\) successively.

Hence \(b^n\le bk\) and the relative overhead of using exact versus inexact \(p\)-adics is \[R(k) := \frac{C^*(k)}{C(k)} \le \frac{b^{\alpha+1}}{b^\alpha-1} =: r(\alpha,b).\]

Setting \(\tfrac{\partial}{\partial b} r(\alpha,b)=0\), we find that \(r(\alpha,b)\) is minimized by \(b=b^*:=(1+\alpha)^{\alpha^{-1}}\) and \[r^*(\alpha) := r(\alpha, b^*) = \frac{(1+\alpha)^{1+\alpha^{-1}}}{\alpha}.\] In particular, for linear-cost operations (\(\alpha=1\)) this optimum is \(b^*=2\) and \(r^*=4\). That is, our choice of \(b=2\) is optimal with respect to relative cost of linear operations between exact and inexact arithmetic.

Furthermore \(\tfrac{\partial}{\partial\alpha} r(\alpha,b)<0\), so we deduce that \(R(k)\le4\) when \(\alpha\ge1\). That is, the relative overhead is at most 4 for any super-linear computation.

We can also ask: since \(b=2\) is only optimal for \(\alpha=1\), how much further could we have reduced \(R(k)\) for a specific \(\alpha>1\)? Since \(2<e\), we have \(2^\alpha\ge1+\alpha\) and we deduce \[\alpha2^\alpha\le(2^\alpha-1)(1+\alpha)\le(2^\alpha-1)(1+\alpha)^{1+\alpha^{-1}}\] which rearranges to \[r(\alpha,2)\le 2\cdot r^*(\alpha).\] That is, with our non-optimal choice of \(b=2\) for \(\alpha>1\), we are still within a factor of 2 of the optimal overhead. This ratio grows quite slowly, so even for cubic-cost algorithms (\(\alpha=3\)) then \(r(\alpha,2)\le1.08\cdot r^*(\alpha)\). That is, optimizing instead for cubic-cost operations would save only around \(10\%\) compared to using \(b=2\).

We emphasize that this factor of 4 overhead is in the worst-case scenario that the inexact computation is performed exactly once, at the correct precision, which must have been known in advance. In the more common scenario that the correct precision was not known in advance, the inexact computation would necessarily be repeated at increasing precisions just as for the exact computation, in which case the overhead factor is 1 for linear operations, and no more than 2 for super-linear operations.

\subsection{Epoch versus precision}
\label{sec-epoch-vs-prec}

Perhaps a more obvious scheme for lazily computed \(p\)-adics would be one which deals with precisions of elements directly, instead of using the epoch. Such a scheme is implemented in the prototype Magma package \code{ExactpAdics} \citep{exactpadicscode} and works essentially as follows. Each element \(X\) is still defined by its type, dependencies and kind, but now the kind stores slightly different information. Firstly, there is a function \[[k_1,\ldots,k_d] := \code{GetReqPrec}(k, [D^{(1)},\ldots,D^{(d)}])\] which returns a precision for each dependency. Secondly, there is a function \[\tilde X_{[k]} := \code{GetApprox}(k, [\tilde D^{(1)}_{[k_1]}, \ldots, \tilde D^{(d)}_{[k_d]}])\] which returns the approximation of \(X\) to precision \(k\), given approximations of its dependencies at precisions \([k_1,\ldots,k_d]\). With this pair of functions, we can compute \(\tilde X_{[k]}\) by computing \([k_1,\ldots,k_d]\) using \code{GetReqPrec}, then recursively computing each \(\tilde D^{(i)}_{[k_i]}\), then calling \code{GetApprox}.

We now discuss the relative merits of these schemes.

\subsubsection{Precision optimality}

The main advantage of the precision-based scheme is that one can compute an approximation \(\tilde X_{[k]}\) to some required precision using as little precision as possible in all its dependent computations.

It is rare, however, that one needs to know an approximation to some specific precision. It is more common that one is interested in some other property of the object, such as its valuation, for which any sufficiently large approximation will do. In such cases, it is likely that one will compute the approximation at successively higher precisions, such as by repeatedly doubling the precision, which is essentially what the epoch scheme does anyway.

Nevertheless, there are corner cases where this could matter. For example, suppose we compute \(z := f(x, y)\) where \(x\) is very expensive to compute, and where \(y\) is cheap to compute but loses a lot of precision. Under the epoch scheme, since \(y\) loses a lot of precision, so will \(z\), and therefore to achieve a particular precision we must compute \(\tilde z_n\) at some relatively high epoch \(n\), which is expensive because computing \(\tilde x_n\) at this epoch is expensive. That is, \(\tilde x_n\) is being computed to an unnecessarily high precision, a problem avoided by the precision scheme. Such corner cases seem rare.


\subsubsection{Complexity of implementation and performance}

The \code{GetApprox} function in both schemes will be implemented almost identically. On the other hand, the precision scheme additionally requires the \code{GetReqPrec} function. This function is used to do a \define{backwards pass} to determine the precisions required of all recursive dependencies, followed by a \define{forward pass} using \code{GetApprox} to actually compute the approximations. The epoch scheme only does a forward pass.

In simple cases, such as addition or multiplication of \(p\)-adic numbers, \code{GetReqPrec} is simple to implement. However, something just slightly more complex such as polynomial multiplication gets very unwieldy to implement: one has to find all the places where each coefficient contributes to the result, find what it is multiplied with, find the precision required, and take the maximum over all such places. This is also a significant amount of extra computation.

Worse, in some cases, it is practically impossible to determine the precision required in advance. This necessitates further complexity in the implementation: we must allow \code{GetReqPrec} to return a ``best guess'' at the precision required, and allow \code{GetApprox} to return a null response, indicating that the precision required was not enough. At this point, the implementation increases the guessed precision by further calls to \code{GetReqPrec} until \code{GetApprox} succeeds. This means that we cannot easily bound the number of calls to \code{GetApprox} for any particular object, and so we could spend arbitrarily long doing dependency tracking. Additionally, the benefit of precision optimality is nullified in this case. Contrast this with the epoch scheme, where \code{GetApprox} is called precisely once per epoch per object. Note also that if such a null response occurs, the forward pass must be stopped and a partial backwards pass must occur again.

Observe that in the precision scheme, during the backwards pass the same dependency \(D\) might occur multiple times, possibly with different precisions \(k_1, k_2, \ldots\). In a naive implementation, during the forwards pass the precision of \(D\) would be increased multiple times, to \(k_1\) then to \(k_2\) and so on. To avoid this, we can keep track of which dependencies appear multiple times, and take the supremum of their required dependencies. To do this efficiently requires explicitly representing the directed acyclic graph of dependencies. In contrast, this is avoided in the epoch scheme by definition: even if the same dependency occurs multiple times, it is always required at the same epoch \(n\). 


\subsubsection{Representation of precision}

In the precision scheme, we need to define what we mean by precision. The absolute or relative precision of a \(p\)-adic number is simply an integer or \(\infty\). But for aggregate structures such as polynomials, there is a choice to make for how to define them.

One option would be to define the precision of a polynomial \(f(x)=\sum_i f_ix^i\) to be the minimum precision of its coefficients \[\code{precision}(f) := \min_i \code{precision}(f_i)\] which is itself an integer, but this loses information since some coefficients may have a higher precision. In particular, this could mean that a higher precision gets used than was necessary, nullifying the benefits of precision optimality.

Another option is to define the precision to be the sequence of precisions of its coefficients \[\code{precision}(f) := (\code{precision}(f_0), \code{precision}(f_1), \ldots).\] This does not lose information, but now the precision is represented by an object of similar complexity to the polynomial itself, and \code{GetReqPrec} has even more to compute.

\subsection{Do not re-use previous approximations}
\label{sec-no-reuse}

One might expect that we could use the approximation \(\tilde X_{n-1}\) in order to compute \(\tilde X_n\) more quickly, since by definition they agree up to their common precision and only the difference needs to be computed. There are a number of reasons we do not do this.

Firstly, this would essentially require re-implementing all inexact \(p\)-adic operations from scratch in order to use this extra information, instead of the present scheme which simply directly uses the inexact \(p\)-adic operations as they are.

Secondly, the redundancy allows for error-checking, namely that \(\tilde X_{n-1}\) and \(\tilde X_n\) are weakly equal as expected. In practice this is a very useful way to catch precision-related errors coming from the underlying inexact arithmetic. For example it is not an uncommon implementation error that the trailing few \(p\)-adic coefficients of a ramified field element are incorrect after reducing modulo the defining Eisenstein polynomial.

Thirdly, by similar arguments as in Section~\ref{sec-exp-prec}, this would not actually save much computation time: since precision is increasing exponentially, the proportion of time spent repeating previous computations is bounded less than 1. See Section~\ref{sec-timings}.

\subsection{Exact aggregate structures}
\label{sec-aggregates}

We earlier described our implementation of polynomials over exact \(p\)-adic rings, and stated our intention to define other structures such as vector spaces, despite the fact that such structures already exist generically in Magma over any base ring. We now justify these new representations.

Consider the multiplication \(h(x):=f(x)g(x)\in K[x]\) of two univariate polynomials of degree \(d\) over an exact \(p\)-adic field \(K\), which entails multiplying around \(d^2\) pairs of coefficients. Using the generic polynomial type \code{RngUPolElt}, which stores the coefficients individually, these \(d^2\) intermediate values exist forever as dependencies of the coefficients of \(h(x)\). In contrast, using our \code{RngUPolXPadElt} then \(h\) depends only on \(f\) and \(g\). This shrinking of the dependency graph not only reduces the time spent tracking dependencies, but also reduces the memory usage since the intermediate variables in the computation of \(\tilde h_n\) are deleted once it has been computed.

Now consider another example, let \(f(x)\in K[x]\) be a polynomial and let \(g(x)\) be one of its irreducible factors, such as returned from a factorization routine. The approximations of \(g\) are computed using Hensel lifting, and in particular an approximation of \(g\) can only be computed in its entirety. That is, it is not known how to compute only a single coefficient of \(g\) to high precision faster than computing all coefficients. Hence it is natural to represent \(g\) in its entirety, as \code{RngUPolXPadElt} does, instead of by a sequence of coefficients. If \(g\) were represented naively as a generic \code{RngUPolElt}, then in order to compute an approximation of one of its coefficients, we would have to compute an approximation of the whole polynomial via Hensel lifting and extract the coefficient, and hence computing an approximation to all coefficients of \(g\) would repeat the same Hensel lifting step \(\deg g\) times.

As a compromise between usability and efficiency, we still allow the user to use the generic \code{RngUPolElt} type, but implicitly convert this to \code{RngUPolXPadElt} and back for certain operations.

\section{Performance comparison}
\label{sec-timings}

In this section, we compare the performance of our exact \(p\)-adic arithmetic versus the equivalent inexact \(p\)-adic arithmetic in Magma. Since our exact arithmetic uses the inexact arithmetic internally, these comparisons measure the other overheads in the exact arithmetic, such as from recursively resolving dependencies.

For this reason, we are not making comparisons to \(p\)-adic systems in other computer algebra systems, since this would be comparing the performance of Magma's inexact arithmetic, which is beyond the scope of this article.

\paragraph{Experiment 1}

We compute \(y = \sum_{i=1}^N x_i\) where \(x_1=1\in\QQ_2\), \(x_2=2\in\QQ_2\) and \(x_i=x_{j_i}+x_{k_i}\) for \(i=2,\ldots,N\) where \(1\le j_i,k_i<i\) are chosen at random. Using our exact arithmetic, we compute approximations of \(y\) at epochs \(n=1,\ldots,16\). Using the inexact arithmetic, we also compute \(y\) to absolute precisions \(k=2^1,2^2,\ldots,2^{16}\), which emulates the computations being performed in the exact case. The same random choices are used for each computation.

Note that the definition of \(y\) is designed to be something which is quick to compute, but which has a complex set of dependencies, and therefore should highlight any overheads due to dependency tracking.

We repeat these computations 50 times with different random choices, and report the average times in Table~\ref{tbl-timings-1} along with standard deviations. For the exact arithmetic, we also report separately the time to construct \(y\) and the time to compute its approximations. We also report the time just to compute the final approximation, at epoch 16 or precision \(2^{16}\).

Note that we repeat this experiment with three different modes of use of exact arithmetic: the default behaviour; or the default behaviour but with consistency checking of each approximation disabled; or with the optimization from Section~\ref{sec-optimize} enabled.

\begin{table}
\centering
\begin{tabular}{l rcrcrr}
\hline
 & \multicolumn{6}{c}{Mean time (core seconds)} \\
Mode & Total \(\pm\sigma\) & \multicolumn{2}{r}{Constr.} & \multicolumn{2}{r}{Approx.} & Final \\
\hline
\(N=10,000\) \\
Inexact & \(0.25\pm 0.01\) & & & & & 0.02 \\
Exact (default) & \(0.31\pm 0.02\) &=& 0.01 &+& 0.30 & 0.02 \\
Exact (no checks) & \(0.18\pm 0.01\) &=& 0.01 &+& 0.17 & 0.01 \\
Exact (optimized) & \(0.14\pm 0.01\) &=& 0.02 &+& 0.12 & 0.01 \\
\hline
\(N=100,000\) \\
Inexact & \(3.01 \pm 0.02\) & & & & & 0.19 \\
Exact (default) & \(4.73\pm 0.16\) &=& 0.16 &+& 4.57 & 0.29 \\
Exact (no checks) & \(3.13\pm 0.15\) &=& 0.16 &+& 2.97 & 0.18 \\
Exact (optimized) & \(2.20\pm 0.02\) &=& 0.30 &+& 1.90 & 0.12 \\
\hline
\end{tabular}
\caption{Timings for Experiment 1 over different arithmetic modes.}
\label{tbl-timings-1}
\end{table}

Observe that the exact arithmetic is at most around \(1.60\) times slower than the equivalent inexact arithmetic, reducing to about \(1.05\) times slower with consistency checking disabled, implying that most of the overheads are in consistency checking.

With the optimization enabled, the computation is actually about \(1.35\) times faster, even though the time to construct the element has doubled. Note that the optimization by necessity does not perform consistency checking on any intermediate computations, which explains part but not all of the speed-up. It is likely that the inexact arithmetic has some overheads coming from the Magma interpreter, which is invoked much less in the exact arithmetic.

We conclude that there is not a large overhead in using our exact \(p\)-adic system compared to manually repeating a computation at successively increasing precisions. What overhead there is can be eliminated or even reversed using the optimization.

\paragraph{Experiment 2}

We now repeat the previous experiment, except that we define \(x_1=1/3\), \(x_2=1/5\) and go up to epoch \(n=24\). The values of \(x_i\) and \(y\) are no longer small integers, and therefore require more significant computation. Given also the high epoch, we expect overheads to now be negligible in comparison to the main computation. In the notation of Section~\ref{sec-exp-prec}, in this experiment we expect \(C(k)\propto k=2^n\) (i.e. \(\alpha=1\)) whereas in the previous experiment we have \(C(k)\) being essentially a constant (i.e. \(\alpha=0\)).

\begin{table}
\centering
\begin{tabular}{l rcrcrr}
\hline
 & \multicolumn{6}{c}{Mean time (core seconds)} \\
Mode & Total \(\pm\sigma\) & \multicolumn{2}{r}{Constr.} & \multicolumn{2}{r}{Approx.} & Final \\
\hline
\(N=500\) \\
Inexact & \(1.72\pm0.05\) & & & & & \(0.88\) \\
Exact (default) & \(2.05\pm0.07\) &=& 0.00 &+& 2.05 & 1.05 \\
Exact (no checks) & \(1.75\pm0.05\) &=& 0.00 &+& 1.75 & 0.90 \\
Exact (optimized) & \(1.71\pm0.05\) &=& 0.00 &+& 1.71 & 0.89 \\
\hline
\end{tabular}
\caption{Timings for Experiment 2 over different arithmetic modes.}
\label{tbl-timings-2}
\end{table}

The timings from this experiment are in Table~\ref{tbl-timings-2}. Aside from a factor of \(1.2\) coming from consistency checking in the default mode, the timings are all very similar, up to variance. Hence our implementation introduces negligible other overheads in this case.

Also observe that in this case approximately half of the total time is due to the final approximation, as one expects due to the exponentially increasing precision, justifying Section~\ref{sec-no-reuse}.

\paragraph{Experiment 3}

In this experiment, we compare performance of our polynomial arithmetic with equivalent inexact computations in both Magma and Sage.

Defining
\begin{align*}
f_{d,0}(x) &= x^d - 2^{10d+1} \\
f_{d,u}(x) &= f_{d,0}(x-u) \\
g_{2d,u}(x) &= f_{d,u}(x)f_{d,u+2^{11}}(x)
\end{align*}
then \(g_{2d,u}(x)\) has two irreducible factors over \(\QQ_2\) of degree \(d\), with roots \(u+2^{10}\sqrt[d]2\) and \(u+2^{10}(2+\sqrt[d]2)\), which are all close to \(u\). This is designed to be difficult to factorize.

Using our exact arithmetic, we factorize \(f_{16,u}(x)\) over \(\QQ_2\), then obtain approximations of its factors at each epoch \(n=1,\ldots,16\). Using inexact arithmetic, we factorize \(f_{16,u}(x)\) to precision \(2^n\) for \(n=1,\ldots,16\). We also repeat the latter using Sage. We repeat all of this but now factorizing \(g_{16,u}(x)\).

These experiments are repeated for 10 random choices of units \(u\in\ZZ_2^\times\), with average timings given in Table~\ref{tbl-timings-3}. The ``Exact'' column is cumulative: it gives the total time to compute the given epoch and all lower epochs.


\begin{table}
\centering
\begin{tabular}{rrrr}
    \hline
    & \multicolumn{3}{c}{Mean time (core seconds)} \\
    \(n\) & Exact & Inexact & Sage \\
    \hline
     8 & 0.02 & 0.02 &  --- \\
     9 & 0.02 & 0.02 &  --- \\
    10 & 0.02 & 0.03 &  --- \\
    11 & 0.02 & 0.05 &  --- \\
    12 & 0.02 & 0.07 & 0.02 \\
    13 & 0.02 & 0.23 & 0.04 \\
    14 & 0.02 & 0.84 & 0.11 \\
    15 & 0.02 & 3.27 & 0.25 \\
    16 & 0.02 & 12.57 & 0.68 \\
    \hline
\end{tabular}
\hspace{2em}
\begin{tabular}{rrrr}
    \hline
    & \multicolumn{3}{c}{Mean time (core seconds)} \\
    \(n\) & Exact & Inexact & Sage \\
    \hline
     8 & 0.02 &  --- &  --- \\
     9 & 0.03 &  --- &  --- \\
    10 & 0.05 &  --- &  --- \\
    11 & 0.08 & 0.19 &  --- \\
    12 & 0.16 & 0.37 & 0.08 \\
    13 & 0.46 & 1.07 & 0.15 \\
    14 & 1.40 & 3.82 & 0.34 \\
    15 & 4.36 & 14.61 & 0.90 \\
    16 & 13.06 & 55.85 & 2.31 \\
\hline
\end{tabular}
\caption{Timings for Experiment 3. Left: irreducible \(f_{16,u}(x)\). Right: reducible \(g_{16,u}(x)\). Blank entries show failures.}
\label{tbl-timings-3}
\end{table}

In all examples, the exact computation required epoch \(n=8\) to compute the initial factorization. Given that the roots are indistinguishable modulo \(2^{10}\), distinguishable modulo \(2^{12}\), and the degree is 16, precision \(10\times16>2^7\) is necessary and \(12\times16\le2^8\) is sufficient, and so \(n=8\) is optimal. The inexact computations typically required higher precision to succeed, with the exeption of Magma in the irreducible \(f_{16,u}(x)\) case. However, in this case the Magma implementation also incorrectly factorizes this into 16 linear factors for \(n<8\) instead of raising an error.

On the reducible \(g_{16,u}(x)\), we see that Sage, when it succeeds, is somewhat quicker than our exact implementation at the same precision, with the gap widening as the precision increases, likely due to a superior Hensel lifting implementation. On irreducible \(f_{16,u}(x)\) our exact implementation is quicker because one can trivially avoid Hensel lifting altogether.

On the other hand, the exact factorization succeeds at a much lower precision than the other implementations and obtains its initial factorization more quickly. This is beneficial in situations where the factorization is not required to high precision, such as in Galois group computations where only the degrees of the factors are needed.

Furthermore, the Sage implementation relies on PARI/GP \citep{parigp} and therefore currently does not support extensions of \(\QQ_p\). Magma supports arbitrary finite extensions of \(\QQ_p\).

\section{Some particular algorithms}
\label{sec-algorithms}

\subsection{Valuation}

We begin by demonstrating an algorithm to compute the valuation of a \(p\)-adic number \(x\). It works through each epoch \(n=1,2,\ldots\) in turn. If at epoch \(n\), the valuation is known from the approximation \(\tilde x_n\) (namely the relative precision is positive, or \(\tilde x_n\) is the precise zero) then the weak valuation is returned, since this is equal to the valuation. Otherwise we move on to the next epoch.

\begin{algorithm}[\code{Valuation}]
Given a \(p\)-adic number \(x\), returns its valuation.
\begin{algorithmic}[1]
\For{\(n=1,2,\ldots\)}
    \If{\(\code{ValuationIsKnownAtEpoch}(x,n)\)}
        \State \Return \(\code{WeakValuationAtEpoch}(x,n)\)
    \EndIf
\EndFor
\end{algorithmic}
\end{algorithm}

Note that if we wish to query if \(\val(x) \ge 0\) then one option would be to compute \(\code{Valuation}(x) \ge 0\). However, if the valuation is actually very much higher than 0 then computing \(\code{Valuation}(x)\) exactly is wasted effort, it suffices to stop as soon as its weak valuation is greater than 0. If in fact \(x=0\) then \(\code{Valuation}(x)\) might not terminate. Hence we provide:

\begin{algorithm}[\code{ValuationCmp}]
Given a \(p\)-adic number \(x\) and an integer \(v\), returns \(-1\) if \(\val(x)<v\), \(0\) if \(\val(x)=v\), or \(1\) if \(\val(x)>v\).
\begin{algorithmic}[1]
\For{n=1,2,\ldots}
    \If{\(\code{ValuationIsKnownAtEpoch}(x,n)\) or \newline\hspace*{1.2cm} \(\code{AbsolutePrecisionAtEpoch}(x,n)>v\)}
        \State \Return \(\code{Cmp}(\code{WeakValuationAtEpoch}(x, n), v)\)
    \EndIf
\EndFor
\end{algorithmic}
\end{algorithm}

Unlike \code{Valuation}, \code{ValuationCmp} is guaranteed to terminate.

\subsection{Newton polygon}
\label{sec-newton-polygon}

Recall the definition and a key property of the Newton polygon of a univariate polynomial.

\begin{definition}
If \(f(x) = \sum_{i=0}^d f^{(i)} x^i \in K[x]\) is a polynomial over a \(p\)-adic field \(K\), then its \define{Newton polygon} \(\Delta\) is the lower convex hull in \(\QQ \times \QQ\) of the points \((i, \val(f^{(i)}))\). It can also be interpreted as the graph of a function \(\Delta:[0,d] \to \QQ\). By definition, this function is continuous, convex and piece-wise linear. 
\end{definition}

\begin{lemma}
\label{lem-newtonpgon}
Given a face of \(\Delta\), i.e. a maximal line segment in \(\Delta\) from \((i_0,v_0)\) to \((i_1,v_1)\), then \(f\) has precisely \(w=i_1-i_0\) roots in \(K^{\mathrm{alg}}\) of valuation \(-s=-\tfrac{v_1-v_0}{w}\). Writing \(-s=h/e\) in lowest terms, it follows that \(K(r)/K\) has ramification degree a multiple of \(e\).
\end{lemma}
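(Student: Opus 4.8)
The plan is to prove the root count by the classical elementary-symmetric-function description of the Newton polygon, and then read off the ramification bound from the value group of \(K(r)\).

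First I would fix an algebraic closure \(K^{\mathrm{alg}}\), on which \(\val\) has a unique extension, and write \(f(x) = f^{(d)}\prod_{j=1}^{d}(x-r_j)\). Since \(f^{(i)} = (-1)^{d-i}f^{(d)}\sigma_{d-i}(r_1,\dots,r_d)\) with \(\sigma_k\) the \(k\)th elementary symmetric polynomial, we get \(\val(f^{(i)}) = \val(f^{(d)}) + \val\bigl(\sigma_{d-i}(r_1,\dots,r_d)\bigr)\). Now group the roots by valuation: let \(\lambda_1 < \dots < \lambda_t\) be the distinct values taken by \(\val(r_j)\), with multiplicities \(m_1,\dots,m_t\) summing to \(d\). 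Each monomial of \(\sigma_k\) is a product of \(k\) distinct roots, so \(\val(\sigma_k)\) is at least the sum of the \(k\) smallest root valuations; and when \(k = m_1 + \dots + m_\ell\) is a breakpoint this bound is attained \emph{with equality}, because the monomial formed from exactly the roots of valuation \(\le \lambda_\ell\) is then the unique monomial of minimal valuation, so no cancellation is possible. It follows that the convex piecewise-linear function \(g\) through the points \(\bigl(d-(m_1+\dots+m_\ell),\ \val(f^{(d)})+m_1\lambda_1+\dots+m_\ell\lambda_\ell\bigr)\) for \(\ell = 0,\dots,t\) lies weakly below every point \((i,\val(f^{(i)}))\) and meets them at each of its vertices; convexity (together with the fact that \(g\) is then the largest convex minorant of the data) forces \(g = \Delta\). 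Its \(\ell\)th face has horizontal length \(m_\ell\) and slope \(-\lambda_\ell\), so a face from \((i_0,v_0)\) to \((i_1,v_1)\) with \(i_0<i_1\) satisfies \(w = i_1-i_0 = m_\ell\) and \(-\lambda_\ell = (v_1-v_0)/w\); that is, there are exactly \(w\) roots in \(K^{\mathrm{alg}}\), all of valuation \(\lambda_\ell = -s\).

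For the ramification claim, let \(r\) be any such root and \(L=K(r)\), with \(e(L/K)\) its ramification degree. The unique extension of \(\val\) to \(L\) has value group \(\tfrac{1}{e(L/K)}\ZZ\), since \(\val(K^\times)=\ZZ\) and \([\val(L^\times):\val(K^\times)] = e(L/K)\); as \(r \in L\), its valuation \(-s = h/e\) lies in \(\tfrac{1}{e(L/K)}\ZZ\), so \(e \mid e(L/K)\,h\), and \(\gcd(h,e)=1\) forces \(e \mid e(L/K)\). The one genuinely substantive point is the no-cancellation-at-breakpoints step and the bookkeeping that converts it into the slope/length description of the faces of \(\Delta\); the unique extendability of \(\val\) to \(K^{\mathrm{alg}}\) and the value-group index formula \([\val(L^\times):\val(K^\times)]=e(L/K)\) are standard and may be quoted.
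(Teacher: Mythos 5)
The paper states this lemma without proof, treating it as the classical Newton polygon theorem; so there is no in-paper argument to compare against, and your proof must stand on its own. It does: your account via elementary symmetric functions is the standard one, and the key step (no cancellation in \(\sigma_k\) at a breakpoint \(k=m_1+\dots+m_\ell\) because the unique minimum-valuation monomial is the product of exactly the roots of valuation \(\le\lambda_\ell\)) is correct and stated cleanly. The bookkeeping converting \(\val(f^{(i)})=\val(f^{(d)})+\val(\sigma_{d-i})\) into the vertex coordinates is right, including the sign and the fact that the slopes \(-\lambda_t<\dots<-\lambda_1\) are increasing left-to-right, so \(g\) is convex. The one spot that is compressed is the claim that \(g=\Delta\): what you actually need is the small observation that any convex minorant \(h\) of the data satisfies \(h\le g\) segment-by-segment (since \(h\) is below the chord joining \((i_0,h(i_0))\) and \((i_1,h(i_1))\), which is below the chord joining the matching values of \(g\), which is \(g\) itself there); together with \(g\le\Delta\) this gives \(g=\Delta\). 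You gesture at this with ``the largest convex minorant'' but could spell it out. The ramification argument from \(\val(L^\times)=\tfrac{1}{e(L/K)}\ZZ\) and \(\gcd(h,e)=1\) is correct and standard. Overall: correct, and essentially the canonical proof of this classical fact.

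Two small stylistic remarks. First, the statement of the lemma implicitly assumes \(f^{(0)}\neq 0\) (so that all roots \(r_j\) have finite valuation); if \(0\) is a root of multiplicity \(m\) one simply strips \(x^m\) and restricts the polygon to \([m,d]\), which your argument handles after that reduction but you might want to flag. Second, the \(r\) in the second sentence of the lemma is undeclared; you correctly read it as ``let \(r\) be any of the \(w\) roots of valuation \(-s\),'' which is the intended meaning.
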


Similarly, one can deduce the residue class of \(r^e/\pi^h\) (which has valuation 0) from the residual polynomial corresponding to the face, and obtain bounds on the residue degree of \(K(r)/K\). Together, this is the key information used in root finding and polynomial factoring algorithms over \(K\), and so we provide algorithms to compute them.

Computing the Newton polygon requires some care. One method could be to simply compute \(\code{Valuation}(f^{(i)})\) for each coefficient, but unless the polygon has a vertex at \(i\) then this is redundant and might not terminate, for similar reasons as motivated \code{ValuationCmp} above.

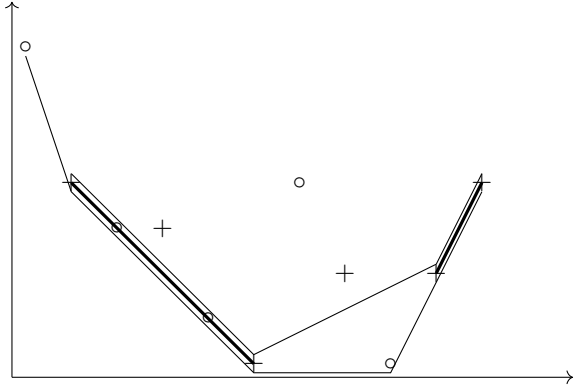
\begin{figure}
\centering
\begin{tikzpicture}[scale=0.6]
\draw[->](-0.3,-0.3)--(-0.3,8);
\draw[->](-0.3,-0.3)--(12,-0.3);
\foreach \x/\y in {0/7,2/3,4/1,6/4,8/0}
  \node(\x\y) at (\x,\y){\(\circ\)};
\foreach \x/\y in {1/4,3/3,5/0,7/2,9/2,10/4}
  \node(\x\y) at (\x,\y){\(+\)};
\draw([yshift=-0.2cm]07.center)--([yshift=-0.2cm]14.center)--([yshift=-0.2cm]50.center)--([yshift=-0.2cm]80.center)--([yshift=-0.2cm]104.center);
\draw[very thick](14.center)--(50.center);
\draw[very thick](92.center)--(104.center);
\draw([yshift=0.2cm]14.center)--([yshift=0.2cm]50.center)--([yshift=0.2cm]92.center)--([yshift=0.2cm]104.center);
\end{tikzpicture}
\caption[Computation of Newton polygon]{Computation of a section of a Newton polygon (heavy line) from lower and upper polygons. Circles indicate the weak valuations of weakly zero coefficients, crosses indicate valuations of non weakly zero coefficients. Observe that since each end of the leftmost piece of the Newton polygon is at a vertex of the lower polygon, then these must also be vertices of the Newton polygon; contrast with the rightmost piece, in which the face could extend further to the left.}
\label{fig-newtonpgon}
\end{figure}

Instead we run through epochs \(n=1,2,\ldots\), find the approximation \(\tilde f_n\) of \(f\) at that epoch, and compute two related polygons. The \define{lower polygon} of \(\tilde f_n\) is the lower convex hull of the points \((i,w_i)\) where \(w_i\) is the weak valuation of the \(i\)th coefficient \(\tilde f^{(i)}_n\). The \define{upper polygon} of \(\tilde f_n\) is the lower convex hull of the subset of the points \((i,w_i)\) such that the valuation of \(\tilde f^{(i)}_n\) is known, and therefore \(w_i=\val(f^{(i)})\). Since a weak valuation is a lower bound on the true valuation, the lower polygon lies below the Newton polygon. Since the upper polygon is defined using a subset of the points that defined the Newton polygon, it lies above the Newton polygon. Therefore, wherever the lower and upper polygon are equal, they are also equal to a section of the Newton polygon (see Figure~\ref{fig-newtonpgon}). Hence if the lower and upper polygons are equal, return the common polygon, otherwise move on to the next epoch.

The lower and upper polygons can also provide useful partial information about the Newton polygon. For example, in Figure~\ref{fig-newtonpgon} we can see that the Newton polygon has a vertex above 1, even though the face on \([0,1]\) is not known. This allows testing the Hensel condition in Lemma~\ref{lem-hensel-general}.

\subsection{Hensel's lemma for univariate root-finding}
\label{xp-sec-hensel}

Recall Hensel's classic lemma.

\begin{lemma}[Hensel]
\label{lem-hensel}
Suppose \(f(x) \in \cO[x]\), \(a \in \cO\) such that \(v(f(a)) \geq s > 0 = v(f'(a))\). Then there exists a unique \(b \in K\) such that \(f(b) = 0\) and \(v(a-b) \geq s\). More precisely, defining \(a' := a - f(a)/f'(a)\) then \(v(f(a')) \geq 2s\) and \(v(f'(a')) = 0\), so iterating \(a \mapsto a'\) then \(a \to b\).
\end{lemma}

We refer to the iteration process in Hensel's lemma as \define{Hensel lifting}. Hensel's lemma can be generalized to non-integral inputs:

\begin{lemma}[Non-integral Hensel]
\label{lem-hensel-general}
Suppose \(f(x) \in K[x]\), where \(K\) is a \(p\)-adic field, and \(a \in K\) such that among all roots \(b\) of \(f\), \(v(a-b)\) is maximized precisely once. Then iterating \(a \mapsto a - f(a)/f'(a)\) yields \(a \to b\).
\end{lemma}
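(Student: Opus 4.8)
The plan is to track, along the Newton iteration, the valuations of the distances from the current iterate to every root of $f$ in $K^{\mathrm{alg}}$, and to show that the single closest root stays uniquely closest while the iterate converges to it. First I would factor $f(x) = c\prod_{i=1}^{d}(x-b_i)$ with $c \in K^{\times}$ and the $b_i \in K^{\mathrm{alg}}$ listed with multiplicity, and let $b = b_1$ be the root at which $v(a-b_i)$ is maximal. Since this maximum is attained exactly once among the $b_i$ counted with multiplicity, $b_1$ is a \emph{simple} root of $f$. Put $s := v(a-b_1)$ and $t := \max_{i\ge 2} v(a-b_i)$, so that $s > t$; if $a$ is itself a root then necessarily $a = b_1$, the iteration is stationary, and there is nothing to prove, so assume $a$ is not a root.

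Next I would establish the two elementary valuation estimates that drive everything. Using the logarithmic-derivative identity (valid since $a$ is not a root)
\[
\frac{f'(a)}{f(a)} = \sum_{i=1}^{d}\frac{1}{a-b_i} = \frac{1}{u} + w, \qquad u := a-b_1, \quad w := \sum_{i=2}^{d}\frac{1}{a-b_i},
\]
and noting $v(1/u) = -s$ while $v(w) \ge -t > -s$, we get $v(f(a)/f'(a)) = s$. Writing $a' := a - f(a)/f'(a)$, a direct manipulation gives
\[
a' - b_1 = u - \frac{u}{1+uw} = \frac{u^{2}w}{1+uw},
\]
and since $v(uw) = s + v(w) \ge s - t > 0$ the denominator is a unit, whence $v(a'-b_1) = 2s + v(w) \ge 2s - t > s$. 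On the other hand, for $i \ge 2$ we have $v(f(a)/f'(a)) = s > t \ge v(a-b_i)$, so $v(a'-b_i) = v(a-b_i)$.

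Then I would feed these into an induction. With $a_0 := a$ and $a_{k+1} := a_k - f(a_k)/f'(a_k)$, the two estimates show that $b_1$ remains the unique closest root of $f$, that $v(a_k - b_i) = v(a-b_i)$ for all $i \ge 2$ (so $t$ stays constant and $f'(a_k) \neq 0$, its valuation being finite), and that $s_k := v(a_k - b_1)$ satisfies $s_{k+1} - t \ge 2(s_k - t)$. Hence $s_k - t \ge 2^{k}(s-t) \to \infty$, so $v(a_k - b) \to \infty$; as $K$ is complete this forces $a_k \to b$ (and in particular $b \in K$), which is the assertion.

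The main obstacle is not conceptual but a matter of keeping the bookkeeping honest: one must be explicit that the hypothesis is read with multiplicity, so that $b$ is a \emph{simple} root — for a root of multiplicity $\ge 2$ the quantity $v(f(a)/f'(a))$ need not exceed $v(a-b)$ and the iteration can fail to converge — and one must check that the ``uniquely closest root'' property, together with the constancy of $t$, really is preserved all the way along the orbit, since this is exactly what legitimises the induction. The two displayed valuation computations are then routine.
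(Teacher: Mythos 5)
Your proof is correct, but it takes a genuinely different route from the paper's. The paper reduces the statement to the classical integral Hensel's lemma: it observes (via Lemma~\ref{lem-newtonpgon}) that the hypothesis is equivalent to the first face of the Newton polygon of \(f(x+a)\) having width~1, hence integral slope, and then rescales \(g(x):=\pi^j f(\pi^k x+a)\) so that \(g\) has integral coefficients with \(v(g(0))>0=v(g'(0))\), applies Lemma~\ref{lem-hensel} to \(g\) at \(0\), and transfers the iteration back to \(f\) by linearity. You instead argue from scratch: you factor \(f\) over \(K^{\mathrm{alg}}\), use the logarithmic derivative \(f'(a)/f(a)=\sum_i 1/(a-b_i)\) to get \(v(f(a)/f'(a))=s\), compute \(a'-b_1 = u^2 w/(1+uw)\) directly to see the quadratic gain \(v(a'-b_1)\ge 2s-t\), check by the ultrametric inequality that the distances to the other roots are unchanged, and close the induction. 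Both are valid. Your version is self-contained (it re-derives the quadratic convergence instead of invoking the classical lemma as a black box), avoids Newton polygons entirely, and is usefully explicit about why the hypothesis must be read with multiplicity — a point the paper leaves implicit in the width-1 condition. The paper's version is shorter and, more to the point for this article, ties the hypothesis directly to the Newton-polygon criterion that the implementation (\code{IsHenselLiftable}, Section~\ref{xp-sec-hensel}) actually tests, so the proof doubles as a justification of the algorithm. One small remark on your write-up: the identity \(a'-b_1=u^2w/(1+uw)\) already handles the degree-one case gracefully (\(w=0\) gives \(a'=b_1\) at once), so no separate case is needed there; and your concluding observation that completeness of \(K\) forces \(b\in K\) recovers the part of the classical statement that the paper gets for free from Lemma~\ref{lem-hensel}.
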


\begin{proof}
The generalization is actually reducible to the original version.

Consider the polynomial \(f(x+a)\). Its roots are \(b-a\) where \(b\) is a root of \(f\), and so its Newton polygon measures the number of times each \(v(a-b)\) occurs. Hence the hypothesis is equivalent to saying that the first face of the Newton polygon of \(f(x+a)\) has width 1.

Suppose this is true, then in particular the first face has integral slope and so there exist \(j,k \in \ZZ\) so that \(g(x) := \pi^j f(\pi^k x + a)\) has integral coefficients, \(\val(g_0) > 0\) and \(\val(g_1) = 0\). Note that \(g_0 = g(0)\) and \(g_1 = g'(0)\) so the original version of Hensel's lemma applies to \(g\) and \(0\). By linearity, Hensel lifting on \(g\) is equivalent to Hensel lifting on \(f\).
\end{proof}

\begin{remark}
Krasner's lemma \cite[Ch. 7, Corr. 3 to Thm. 1.1]{Cassels} is a corollary of this form of Hensel's lemma.
\end{remark}

We provide an algorithm \code{IsHenselLiftable} which takes as input a polynomial \(f(x) \in K[x]\) and an element \(a \in K\) and returns true if this generalized version of Hensel's lemma can be applied to find a root \(b\) of \(f\) close to \(a\). If so, it also returns that root.

The algorithm proceeds by computing \(f(x+a)\) to sufficient precision to see if the first face of its Newton polygon has width 1 or not, as in Section~\ref{sec-newton-polygon}. If so, then the returned root has dependencies \(f\), \(a\) and some data from the Newton polygon, and its \code{GetApprox} function performs Hensel lifting to the required precision.

\begin{remark}
We also provide an implementation of an ``OM-algorithm'' (essentially that described in \cite[Ch. VI]{SinclairTh}) for finding the roots or irreducible factors of such a polynomial.
\end{remark}

\begin{remark}
Hensel's lemma can be generalized to systems of \(N\) multivariate polynomials in \(N\) variables, essentially by replacing the derivative \(f'(x)\) by the Jacobian matrix of derivatives, and there is a non-integral generalization too (e.g. \cite[Ch. IV, \S9.9]{DPhD}). Multivariate Hensel lifting can also be performed in the relaxed setting \citep{relaxedhensel}.

Furthermore, we may express a factorization \(f(x)=g(x)h(x)\) of a degree-\(N\) monic polynomial \(f\) as the solution to a system of \(N\) equations in \(N\) variables, where the equations are the coefficients of \(f-gh\) and the variables are the coefficients of \(g\) and \(h\). This yields a Hensel's lemma for univariate polynomial factorization, which in particular leads to an algorithm to factorize \(f\) according to the segments of its Newton polygon (e.g. \cite[Ch. IV, \S9.11]{DPhD}). The OM-algorithm mentioned above can be viewed as a generalization of this.
\end{remark}

\section*{Acknowledgements}
This work was partially supported by a grant from GCHQ.

\bibliographystyle{elsarticle-harv}
\mybibliography

\end{document}